\tikzstyle{vertex}=[circle, draw, inner sep=0pt, minimum size=3pt]
\newcommand{\vertex}{\node[vertex]}
\begin{document}
\frontmatter          
\pagestyle{headings}  

\mainmatter              

\title{Word-Representability of Well-Partitioned Chordal Graphs}

\titlerunning{Word-Representability of Well-Partitioned Chordal Graphs}  
%

\author{Tithi Dwary \and 
K. V. Krishna}

\authorrunning{Tithi Dwary \and K. V. Krishna} 

\institute{Indian Institute of Technology Guwahati, Guwahati, India,\\
	\email{tithi.dwary@iitg.ac.in};\;
	\email{kvk@iitg.ac.in}}

\maketitle              

\begin{abstract}
In this paper, we study the word-representability of well-partitioned chordal graphs using split decomposition. We show that every component of the minimal split decomposition of a well-partitioned chordal graph is a split graph. Thus we have a characterization for word-representability of well-partitioned chordal graphs. As a consequence, we prove that the recognition of word-representability of well-partitioned chordal graphs can be done in polynomial time. Moreover, we prove that the representation number of a word-representable well-partitioned chordal graph is at most three. Further, we obtain a minimal forbidden induced subgraph characterization of circle graphs restricted to well-partitioned chordal graphs. Accordingly, we determine the class of word-representable well-partitioned chordal graphs having representation number exactly three.
\end{abstract}

\keywords{Word-representable graph, representation number, split graph,  well-partitioned chordal graph, split decomposition.}

\section{Introduction and Preliminaries}

A word over a finite set of letters is a finite sequence which is written by juxtaposing the letters of the sequence. A subword $u$ of a word $w$, denoted by $u \ll w$, is defined as a subsequence of the sequence $w$. For instance, $aabccb \ll acabbccb$. Let $w$ be a word over a set $X$, and $Y \subseteq X$. Then, $w|_Y$ denotes the subword of $w$ that precisely consists of all occurrences of the letters of $Y$. For example, if $w = acabbccb$, then $w|_{\{a, b\}} = aabbb$. For a word $w$, if $w|_{\{a, b\}}$ is of the form $abab \cdots$ or $baba \cdots$, which can be of even or odd length, we say the letters $a$ and $b$ alternate in $w$; otherwise, we say $a$ and $b$ do not alternate in $w$. A $k$-uniform word is a word in which every letter occurs exactly $k$ times.

In this paper, we consider only simple and connected graphs. A graph $G = (V, E)$ is called a word-representable graph, if there exists a word $w$ over $V$ such that for all $a, b \in V$, $\overline{ab} \in E$ if and only if $a$ and $b$ alternate in $w$. Although, the class of word-representable graphs was first introduced in the context of  Perkin semigroups \cite{perkinsemigroup}, this class of graphs received attention of many authors due to its combinatorial properties. The class of word-representable graphs includes several important classes of graphs such as comparability graphs, circle graphs, $3$-colorable graphs and parity graphs. One may refer to the monograph \cite{words&graphs} for a complete introduction to the theory of word-representable graphs.

A word-representable graph $G$ is said to be $k$-word-representable if there is a $k$-uniform word representing it. In \cite{MR2467435}, It was proved that every word-representable graph is $k$-word-representable, for some $k$. The representation number of a word-representable graph $G$, denoted by $\mathcal{R}(G)$, is defined as the smallest number $k$ such that $G$ is $k$-word-representable. A word-representable graph $G$ is said to be permutationally representable if there is a word of the form $p_1p_2 \cdots p_k$ representing $G$, where each $p_i$ is a permutation on the vertices of $G$; in this case $G$ is called a permutationally $k$-representable graph. The permutation-representation number (in short, \textit{prn}) of $G$, denoted by $\mathcal{R}^p(G)$, is the smallest number $k$ such that $G$ is permutationally $k$-representable. It was shown in \cite{perkinsemigroup} that a graph is permutationally representable if and only if it is a comparability graph - a graph which admits a transitive orientation. Further, if $G$ is a comparability graph, then $\mathcal{R}^p(G)$ is precisely the dimension of an induced partially ordered set (in short, poset) of $G$ (cf. \cite{khyodeno2}). It is clear that for a comparability graph $G$, $\mathcal{R}(G) \le \mathcal{R}^p(G)$. 

The class of graphs with representation number at most two is characterized as the class of circle graphs \cite{Hallsorsson_2011} and the class of graphs with \textit{prn} at most two is the class of permutation graphs \cite{Gallaipaper}. In general, the problems of determining the representation number of a word-representable graph, and the \textit{prn} of a comparability graph are computationally hard \cite{Hallsorsson_2011,yannakakis1982complexity}.

We use the following notations in this paper. Let $G = (V, E)$ be a graph. The neighborhood of a vertex $a \in V$ is denoted by $N_G(a)$, and is defined by $N_G(a) = \{b \in V \mid \overline{ab}\in E\}$. For $A \subseteq V$, the neighborhood of $A$, $N_G(A) = \bigcup_{a \in A} N_G(a) \setminus A$. Further, the subgraph of $G$ induced by $A$ is denoted by $G[A]$. For two sets $A, B \subseteq V$, $G[A, B]$ denotes the bipartite graph with the vertex set $A \cup B$, and the edge set $\{\overline{ab} \in E \mid a \in A, b \in B\}$. We say $A$ is complete to $B$ if $A \cap B = \varnothing$ and each vertex in $A$ is adjacent to every vertex in $B$.

We recall the concepts of split decomposition of a connected graph from \cite{Bouchet_1}. A split of a connected graph $G = (V, E)$ is a bipartition $\{V_1, V_2\}$ of $V$ (i.e., $V = V_1 \cup V_2$ and $V_1 \cap V_2 = \emptyset$) satisfying the following: (i) $|V_1| \geq 2$ and $|V_2| \geq 2$, (ii) $N_G(V_1)$ is complete to $N_G(V_2)$. If a graph has no split, then it is said to be a prime graph. 

A split decomposition of a graph $G = (V, E)$ with split $\{V_1, V_2\}$  is represented as a disjoint union of the induced subgraphs $G[V_1]$ and $G[V_2]$ along with an edge $e = \overline{v_1v_2}$, where $v_1$ and $v_2$ are two new vertices such that $v_1$ and $v_2$ are adjacent to each vertices of $N_G(V_2)$  and $N_G(V_1)$, respectively. By deleting the edge $e$, we obtain two components with vertex sets $V_1 \cup \{v_1\}$ and $V_2 \cup \{v_2\}$ called the split components. The two components are then decomposed recursively to obtain a split decomposition of $G$. 

Note that each split component of a graph $G$ is isomorphic to an induced subgraph of $G$ \cite{cicerone1999extension}. A minimal split decomposition of a graph is a split decomposition whose split components can be cliques, stars and prime graphs such that the number of split components is minimized. While there can be multiple split decompositions of a graph, a minimal split decomposition of a graph is unique \cite{Cunningham_2,Cunningham_1}. 

The concept of split decomposition has a large range of applications including NP-hard optimization \cite{graphcoloring,NPcompleteproblems} and the recognition of certain classes of graphs such as distance-hereditary graphs \cite{DHgraph1}, circle graphs \cite{circlegraph3,circlegraph1}, and parity graphs \cite{cicerone1999extension}. Recently, in \cite{Tithi_2024}, word-representability of graphs was studied with respect to the split decomposition. It was proved in \cite{Tithi_2024} that a graph $G$ is word-representable if and only if all the components of split decomposition of $G$ are word-representable. Moreover, the representation number of $G$ is the maximum of the representation numbers of all components of the split decomposition of $G$. As a consequence, it was established that parity graphs are word-representable \cite{Tithi_2024}.

A connected graph $G = (V, E)$ is a well-partitioned chordal graph if there exist a partition $\mathcal{P}$ of the vertex set $V$ into cliques and a tree $\mathcal{T}$ having $\mathcal{P}$ as a vertex set such that for distinct $A, B \in \mathcal{P}$, (i) the edges between $A$ and $B$ in $G$ form a complete bipartite subgraph whose parts are some subsets of $A$ and $B$, if $A$ and $B$ are adajcent in $\mathcal{T}$, and (ii) there are no edges between $X$ and $Y$ in $G$ otherwise. The class of well-partitioned chordal graphs generalizes the class of split graphs, and is a subclass of the class of chordal graphs. Ahn et al. introduced well-partitioned chordal graphs in \cite{WPC_2022} as a tool for narrowing down complexity gaps for problems that are hard on chordal graphs, and easy on split graphs.  Several problems, e.g., tree 3-spanner problem, transversal
of longest paths and cycles, geodetic set problem which are either hard or open on chordal
graphs were proved to be polynomial-time solvable on well-partitioned chordal graphs \cite{WPC_2022}. A detailed information about well-partitioned chordal graphs can be found in Section \ref{wpc_intro}. 

Note that the recognition of word-representability of split graphs can be done in polynomial time \cite{Kitaev_2024}. However, it is open in the case of chordal graphs. So far there is no result available on the word-representability of well-partitioned chordal graphs. It is evident that not all well-partitioned chordal graphs are word-representable as not all split graphs are word-representable. 

In this paper, using split decomposition as a main tool, we study the word-representability of the class of  well-partitioned chordal graphs. We show that every component of the minimal split decomposition of a well-partitioned chordal graph is a split graph. Consequently, we obtain a characterization for word-representability of well-partitioned chordal graphs, as word-representable split graphs were characterized in the literature. Accordingly, we prove that the recognition of word-representability of well-partitioned chordal graphs can be done in polynomial time. Moreover, we show that the representation number of a word-representable well-partitioned chordal graph is at most three. Further, we obtain a minimal forbidden induced subgraph characterization of circle graphs restricted to well-partitioned chordal graphs. Accordingly, we characterize the class of word-representable well-partitioned chordal graphs which have representation number exactly three.

\section{Well-Partitioned Chordal Graphs} \label{wpc_intro}
In this section, we provide the formal definition of a well-partitioned chordal graph and reconcile some relevant results from \cite{WPC_2022}. A connected graph $G = (V, E)$ is said to be a well-partitioned chordal graph if there exist a partition $\mathcal{P}$ of $V$ and a tree $\mathcal{T} = (V', E')$ having  $\mathcal{P}$ as a vertex set such that the following hold.
\begin{enumerate}
	\item Each part $A$ of $\mathcal{P}$ is a clique in $G$.
	\item For each edge $\overline{AB} \in E'$, there exist subsets $A' \subseteq A$ and $B' \subseteq B$ such that the edge set of the bipartite graph $G[A, B]$ is $A' \times B'$.
	\item For each pair of distinct vertices $A$ and $B$ of $V'$ with $\overline{AB} \notin E'$, the edge set of the bipartite graph $G[A, B]$ is empty.
\end{enumerate}

The class of well-partitioned chordal graphs is hereditary, i.e., closed under induced subgraphs. The tree $\mathcal{T}$ is called a partition tree of $G$, and the elements of $\mathcal{P}$ are called its bags. It is known that a connected well-partitioned chordal graph can have more than one partition tree. A bag $B$ in a partition tree $\mathcal{T}$ is called a leaf bag if the degree of $B$ in $\mathcal{T}$ is one; otherwise it is called an internal bag. Let $A, B \in V'$ be two bags that are adjacent in $\mathcal{T}$. Then, the boundary of $A$ with respect to $B$, denoted by $bd(A, B)$, is defined as $\{a \in A \mid N_G(a) \cap B \neq \varnothing\}$. In view of condition 2 of the definition of a well-partitioned chordal graph, we have the following remark.

\begin{remark}\label{boundary}
If two bags $A$ and $B$ are adjacent in $\mathcal{T}$, then	$bd(A, B)$ is complete to $bd(B, A)$.
\end{remark}

A graph $G$ is called a split graph if the vertex set of $G$ can be partitioned into a clique and an independent set. It can be observed that every split graph is a well-partitioned chordal graph. Moreover, we have the following remark.
\begin{remark}\label{splitgraph}
	A connected well-partitioned chordal graph $G$ is a split graph if and only if there exists a partition tree of $G$ such that it is a star with a clique $C$ as its central bag and each leaf bag is a clique of size one. 
\end{remark}

\section{Word-Representability}

\begin{theorem}\label{prime_components}
	Let $G$ be a well-partitioned chordal graph and $H$ be its minimal split decomposition. Then, every component in $H$ is a split graph. 
\end{theorem}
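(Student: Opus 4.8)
The plan is to reduce the theorem to a single structural claim: \emph{every prime well-partitioned chordal graph is a split graph}. This reduction suffices because each component of the minimal split decomposition is isomorphic to an induced subgraph of $G$, and since the class of well-partitioned chordal graphs is hereditary, every component is itself a well-partitioned chordal graph. The components come in three types --- cliques, stars, and prime graphs. Cliques are trivially split graphs (take the whole vertex set as the clique part and the empty set as the independent part), and stars are split graphs (take the centre as a one-vertex clique and the leaves as an independent set). Thus only the prime components require work, and these are exactly the prime well-partitioned chordal graphs.

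To establish the claim, I would let $G$ be a prime well-partitioned chordal graph and fix a partition tree $\mathcal{T}$ with bag set $\mathcal{P}$. The central observation is that \emph{every edge of $\mathcal{T}$ induces a candidate split}. For an edge $\overline{AB}$ of $\mathcal{T}$, deleting it breaks $\mathcal{T}$ into two subtrees; let $V_1$ and $V_2$ be the unions of the bags in these subtrees, with $A \subseteq V_1$ and $B \subseteq V_2$. Since $G$ has edges only between bags adjacent in $\mathcal{T}$, the only edges of $G$ crossing the bipartition $\{V_1, V_2\}$ lie in $G[A,B]$; hence $N_G(V_1) = bd(B,A)$ and $N_G(V_2) = bd(A,B)$, and by Remark~\ref{boundary} these two sets are complete to each other. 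So $\{V_1, V_2\}$ meets the completeness requirement of a split and fails to be a split only through the size condition, i.e.\ $|V_1| = 1$ or $|V_2| = 1$. As the bags are nonempty, $|V_1| = 1$ forces the subtree containing $A$ to be the single bag $A$ with $|A| = 1$; that is, $A$ is a singleton leaf bag (and symmetrically for $V_2$).

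Because $G$ is prime it has no split, so for \emph{every} edge $\overline{AB}$ of $\mathcal{T}$ at least one endpoint is a singleton leaf bag. I would then read off the shape of $\mathcal{T}$ from this constraint. Any internal bag $C$ (degree at least two) is not a leaf, so each of its neighbours must be a singleton leaf; in particular no internal bag is adjacent to an internal bag. A short connectivity argument then shows $\mathcal{T}$ has at most one internal bag: if there were two, a path between them would have an intermediate bag that is itself internal yet adjacent to an internal bag, a contradiction. If there is exactly one internal bag, $\mathcal{T}$ is a star whose centre is that bag and whose leaves are all singletons; if there is none, $\mathcal{T}$ is a single bag (a clique) or a single edge, which we may regard as a star with a singleton leaf. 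In every case Remark~\ref{splitgraph} gives that $G$ is a split graph, proving the claim and hence the theorem.

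The step I expect to be the main obstacle is not any single deduction but getting the bookkeeping of the structural argument exactly right: one must use that bags are nonempty so that $|V_1| = 1$ genuinely forces a singleton leaf bag, and the ``at most one internal bag'' conclusion must be argued carefully, ruling out both adjacent internal bags and internal bags joined by a longer path. A secondary point worth checking explicitly is that the degenerate (clique and star) components are split graphs under the definition used here, and that invoking the hereditary property together with the induced-subgraph description of split components is legitimate for the minimal split decomposition.
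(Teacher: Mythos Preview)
Your proposal is correct and follows essentially the same approach as the paper: reduce to prime components via heredity and the induced-subgraph property, then use the observation that each edge of the partition tree yields a split unless one side is a singleton leaf bag, forcing the tree to be a star with singleton leaves so that Remark~\ref{splitgraph} applies. The only cosmetic difference is that the paper first isolates a bag of size at least two (ruling out the all-singleton case separately) and then argues about its neighbours, whereas you phrase the constraint uniformly over all tree edges and deduce ``at most one internal bag'' directly; the content is the same.
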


\begin{proof}
	Note that the components in $H$ are cliques, stars, and prime graphs. Since stars and cliques are split graphs, it is sufficient to prove that every prime component of $H$ is a split graph. Let $L = (V, E)$ be a prime component of $H$ such that it is neither a clique nor a star. Then observe that $|V| \ge 4$; otherwise, $L$ is either a star or a clique. Since $L$ is an induced subgraph of $G$, we have $L$ is also a well-partitioned chordal graph \cite{WPC_2022}. Thus, there exist a partition $\mathcal{P}$ of the vertex set $V$ and a partition tree $\mathcal{T} = (V', E')$ having $V' = \mathcal{P}$ such that all the three conditions given in the definition of a well-partitioned chordal graph are satisfied. 
	
	We now claim that not all bags of $\mathcal{P}$ are of size one. On the contrary, suppose that each bag of the partition $\mathcal{P}$ is of size one. Then, it is evident that $L$ is a tree. Since $L$ is not a star, $L$ can be further decomposed into stars, a contradiction that $L$ is a prime graph. Thus, there exists a bag $B$ of $\mathcal{P}$ which is of size at least two. 
	
	We further claim that the bag $B$ cannot be adjacent to a bag of size strictly bigger than one in the partition tree $\mathcal{T}$. On the contrary, suppose that the bag $B$ is adjacent to a bag $B'$ of size at least two in $\mathcal{T}$ so that $\overline{BB'} \in E'$. Let $\mathcal{T}^B$ and  $\mathcal{T}^{B'}$ be the components of $\mathcal{T} \setminus \overline{BB'}$ (the graph obtained by deleting $\overline{BB'}$ from $\mathcal{T}$) containing $B$ and $B'$, respectively.  Further, let $V^{B}$ and $V^{B'}$ be the vertex sets obtained by taking union of all bags appeared in $\mathcal{T}^B$ and  $\mathcal{T}^{B'}$, respectively. Then, note that $|V^{B}| \ge 2$ (as $B \subseteq V^B$) and $|V^{B'}| \ge 2$ (as $B' \subseteq V^{B'}$). Further, we have $V^{B} \cap V^{B'} = \varnothing$ and  $V^{B} \cup V^{B'} = V$. Moreover, $N_L(V^B) = bd(B', B)$ and $N_L(V^{B'}) = bd(B, B')$. In view of Remark \ref{boundary}, we have $N_L(V^B)$ is complete to $N_L(V^{B'})$ so that $\{V^{B}, V^{B'}\}$ forms a split in $L$, a contradiction that $L$ is a prime graph. Thus, the bag $B$ is adjacent to only size-one bags in $\mathcal{T}$. 
	
	We now claim that each size-one bag that is adjacent to $B$ in $\mathcal{T}$ is a leaf bag. 
	Suppose there is a bag of size-one, say $B'$, in $\mathcal{T}$ such that it is adjacent to $B$ but not a leaf in $\mathcal{T}$. Then, there is another bag, say $B''$, in $\mathcal{T}$ such that $\overline{B'B''} \in E'$. Define the subsets $V^{B}$ and $V^{B'}$ of $V$ similarly as above. Then, note that $|V^{B}| \ge 2$ (as $B \subseteq V^B$) and $|V^{B'}| \ge 2$ (as $B' \cup B'' \subseteq V^{B'}$). Further, we have $V^{B} \cap V^{B'} = \varnothing$ and  $V^{B} \cup V^{B'} = V$. Moreover, we have $N_L(V^B) = bd(B', B)$ and $N_L(V^{B'}) = bd(B, B')$. In view of Remark \ref{boundary}, we have $N_L(V^B)$ is complete to $N_L(V^{B'})$ so that $\{V^{B}, V^{B'}\}$ forms a split in $L$, a contradiction that $L$ is a prime graph. Thus, the partition tree $\mathcal{T}$ of $L$ is a star with the bag $B$ as its central clique and each leaf bag is a clique of size-one. Hence, in view of Remark \ref{splitgraph}, we have the prime graph $L$ is a split graph. \qed 
\end{proof}

\begin{remark}
	Note that the converse of Theorem \ref{prime_components} is not true. For instance, the split components of $C_4$, a cycle of length four, are stars. However, $C_4$ is not a chordal graph, and hence not a well-partitioned chordal graph.
\end{remark}

From \cite{Kitaev_2021,Kitaev_2024}, we now recall the characterization of word-representable split graphs as per the following result. Note that for any two integers $a \le b$, the set of integers $\{a, a+1, \ldots, b\}$ is denoted by $[a, b]$.

\begin{theorem}[\cite{Kitaev_2021,Kitaev_2024}]\label{Word_split_graph}
	Let $G = (I \cup C, E)$ be a split graph such that $I$ and $C$ induce an independent set and a clique, respectively, in $G$. Then, $G$ is word-representable if and only if the vertices of $C$ can be labeled from $1$ to $k = |C|$ in such a way that for each $a, b \in I$ the following holds.
	\begin{enumerate}[label=\rm (\roman*)]
		\item \label{point_1} Either $N(a) = [1, m] \cup [n, k]$, for $m < n$, or $N(a) = [l, r]$, for $l \le r$.
		\item \label{point_2} If $N(a) = [1, m] \cup [n, k]$ and $N(b) = [l, r]$, for $m < n$ and $l \le r$, then $l > m$ or $r < n$.
		\item \label{point_3} If $N(a) = [1, m] \cup [n, k]$ and $N(b) = [1, m'] \cup [n', k]$, for $m < n$ and $m' < n'$, then $m' < n$ and $m < n'$.
	\end{enumerate}
\end{theorem}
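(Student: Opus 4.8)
The plan is to route the proof through the well-known characterization of word-representable graphs as exactly those that admit a \emph{semi-transitive orientation} (an acyclic orientation containing no shortcut, i.e.\ no directed path whose vertex set fails to induce a transitive tournament while its endpoints are joined by an edge). This converts the statement into a purely structural question about orienting the edges incident to $C$. Since $C$ is a clique, any acyclic orientation restricts to a transitive tournament on $C$ and hence linearly orders it; I would take that order as the labeling $1, \ldots, k$, so that $i \to j$ exactly when $i < j$. Every $a \in I$ has all its neighbours in $C$, so an orientation splits each $N(a)$ into an in-block $\mathrm{In}(a) = \{c : c \to a\}$ and an out-block $\mathrm{Out}(a) = \{c : a \to c\}$. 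A first easy observation is that acyclicity forces $\max \mathrm{In}(a) < \min \mathrm{Out}(a)$, since a pair $a \to c$, $c' \to a$ with $c < c'$ closes the directed cycle $c' \to a \to c \to c'$.

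For the sufficiency direction I would start from a labeling satisfying \ref{point_1}--\ref{point_3} and exhibit one concrete semi-transitive orientation: orient $C$ transitively, orient each interval vertex ($N(a) = [l,r]$) as a source (so $\mathrm{In}(a) = \varnothing$), and orient each co-interval vertex ($N(a) = [1,m] \cup [n,k]$) so that it \emph{bridges} the gap, i.e.\ $\mathrm{In}(a) = [1,m]$ and $\mathrm{Out}(a) = [n,k]$. I would then rule out shortcuts by a case analysis keyed to the three conditions. Every absent chord is either an $I$--$I$ non-edge or an $I$--$C$ non-edge; the contiguity built into \ref{point_1} prevents a single co-interval vertex from skipping a clique vertex, condition \ref{point_2} kills the length-three paths of the form $b \to m \to a \to n$ between an interval vertex $b$ and a co-interval vertex $a$, and condition \ref{point_3} kills the analogous paths $c_0 \to a \to c_1 \to b \to c_2$ between two co-interval vertices. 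Two interval (source) vertices can never both lie on one directed path, so they create no conflict; checking that these are the only dangerous configurations is routine but must be done carefully.

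For the necessity direction I would argue conversely inside a semi-transitive orientation. A gap inside $\mathrm{In}(a)$ or inside $\mathrm{Out}(a)$ produces a shortcut (e.g.\ $c_- \to c \to c_+ \to a$ with $c_-,c_+ \in \mathrm{In}(a)$ and $c \notin N(a)$), so each block is an interval and $N(a)$ is a union of at most two intervals. Moreover, a genuine gap between the two blocks forces $\mathrm{In}(a)$ to begin at $1$ and $\mathrm{Out}(a)$ to end at $k$: if $p = \min \mathrm{In}(a) > 1$ then $(p-1) \to q \to a \to s$ (with $q = \max \mathrm{In}(a)$, $s = \min \mathrm{Out}(a)$) is a shortcut, and symmetrically for the top end. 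This yields precisely the interval/co-interval dichotomy of \ref{point_1}, after which the same two-independent-vertex paths as above deliver \ref{point_2} and \ref{point_3}.

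The step I expect to be the real obstacle is making this necessity argument honest. A representation of $G$ yields \emph{some} semi-transitive orientation, but the labeling it induces need not satisfy \ref{point_1}--\ref{point_3}: a vertex adjacent to all of $C$ can coexist in a valid orientation with a vertex whose neighbourhood looks like a co-interval, even though \ref{point_2} forbids that combination (one can relabel $C$ to turn the apparent co-interval into an interval). Hence one cannot simply read a good labeling off an arbitrary representation; the crux is to show that word-representability forces the \emph{existence} of a clique ordering that is simultaneously compatible with every vertex of $I$. I would handle this by normalizing the orientation ---in particular choosing canonical directions for clique-universal vertices and for the bridging vertices--- and then arguing global consistency of the resulting order across all of $I$, rather than verifying one vertex at a time.
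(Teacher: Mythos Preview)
The paper does not prove this theorem at all: it is quoted verbatim from \cite{Kitaev_2021,Kitaev_2024} as background and is used only as a black box (via Corollary~\ref{char_wpc} and the complexity discussion). There is therefore nothing in the paper to compare your argument against.

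That said, your outline follows the same strategy as the cited sources, routing through semi-transitive orientations and reading the linear order on $C$ off the induced tournament. Your sufficiency sketch is essentially correct. For necessity you have put your finger on the genuine issue: an arbitrary semi-transitive orientation need not yield a labeling satisfying \ref{point_1}--\ref{point_3} directly, and your example of a clique-universal vertex coexisting with an apparent co-interval vertex is exactly the kind of obstruction one must normalize away. Your proposed fix---canonically reorienting universal and bridging vertices and then arguing that the resulting cyclic order on $C$ can be cut into a linear order consistent with every $a\in I$---is the right idea, but as written it is a plan rather than a proof: you still need to show that the local relabeling moves do not conflict with one another across different vertices of $I$. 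That global consistency step is precisely where the work in \cite{Kitaev_2021,Kitaev_2024} lies, so if you want a self-contained argument you should either carry it out in full or simply cite those papers as the present paper does.
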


Thus, in view of theorems \ref{prime_components}, \ref{Word_split_graph} and \cite[Theorem 3.4]{Tithi_2024}, we characterize word-representable well-partitioned chordal graphs as per the following.

\begin{corollary}\label{char_wpc}
	A well-partitioned chordal graph $G$ is word-representable if and only if all the prime components of the minimal split decomposition of $G$ are word-representable split graphs.
\end{corollary}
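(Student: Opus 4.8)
The plan is to assemble Corollary~\ref{char_wpc} as an immediate synthesis of three ingredients already established: Theorem~\ref{prime_components}, the split-graph characterization of Theorem~\ref{Word_split_graph}, and the decomposition result \cite[Theorem 3.4]{Tithi_2024} quoted in the introduction. The latter asserts that a graph $G$ is word-representable if and only if every component of its split decomposition is word-representable; I will apply it to the \emph{minimal} split decomposition of the given well-partitioned chordal graph $G$.

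First I would invoke \cite[Theorem 3.4]{Tithi_2024} to reduce word-representability of $G$ to word-representability of every component of its minimal split decomposition. The components of a minimal split decomposition are, by the definitions recalled in the introduction, exactly cliques, stars, and prime graphs. Cliques and stars are always word-representable (indeed cliques are $1$-word-representable and stars are trivially representable), so these impose no constraint and the whole condition collapses onto the prime components. Thus $G$ is word-representable if and only if every prime component is word-representable.

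Next I would bring in Theorem~\ref{prime_components}, which guarantees that for a well-partitioned chordal graph every prime component of the minimal split decomposition is in fact a split graph. Combining this with the previous step, the word-representability of $G$ becomes equivalent to the word-representability of each of its prime components \emph{as split graphs}. Theorem~\ref{Word_split_graph} then gives a concrete criterion deciding whether each such split-graph prime component is word-representable. Stringing the equivalences together yields precisely the statement of the corollary: $G$ is word-representable if and only if all the prime components of its minimal split decomposition are word-representable split graphs.

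I do not anticipate a genuine obstacle here, since the corollary is a formal consequence of results already in hand; the only point requiring care is bookkeeping. Specifically, I must make sure the clique and star components are explicitly dismissed as always word-representable so that the biconditional can be phrased solely in terms of the prime components, and I must confirm that Theorem~\ref{prime_components} legitimately lets me treat each prime component as a split graph so that Theorem~\ref{Word_split_graph} applies verbatim. With those two remarks in place, the chain of ``if and only if'' statements closes and the corollary follows.
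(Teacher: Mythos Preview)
Your proposal is correct and matches the paper's own justification: the corollary is stated there as an immediate consequence of Theorem~\ref{prime_components}, Theorem~\ref{Word_split_graph}, and \cite[Theorem 3.4]{Tithi_2024}, with no further argument given. Your added remark that the clique and star components are automatically word-representable is the one piece of bookkeeping the paper leaves implicit, and it is exactly what is needed to restrict attention to the prime components.
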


\begin{theorem}\label{wpc_rp_no}
	If $G$ is a word-representable well-partitioned chordal graph, then $\mathcal{R}(G) \le 3$.
\end{theorem}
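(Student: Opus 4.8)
The plan is to reduce the theorem to split graphs by passing to the split decomposition, and then to bound the representation number of each component separately. First I would let $H$ denote the minimal split decomposition of $G$. By Theorem~\ref{prime_components} every component of $H$ is a split graph; more precisely, since the components of a minimal split decomposition are cliques, stars, and prime graphs, each component is a clique, a star, or a prime split graph. Because $G$ is word-representable, Corollary~\ref{char_wpc} guarantees that every component is itself word-representable. Finally, by \cite[Theorem~3.4]{Tithi_2024} the representation number of $G$ equals the maximum of the representation numbers of the components of its split decomposition. Hence it suffices to show that each component of $H$ has representation number at most three, and the conclusion $\mathcal{R}(G) \le 3$ follows by taking the maximum.

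For the first two types of components the bound is immediate. A clique $K_n$ is represented by the single permutation $1\,2\cdots n$, so $\mathcal{R}(K_n) = 1$. A star with centre $c$ and leaves $a_1, \ldots, a_n$ is represented by the $2$-uniform word $a_1 a_2 \cdots a_n\, c\, a_n a_{n-1} \cdots a_1\, c$: one checks directly that $c$ alternates with every leaf while any two leaves $a_i, a_j$ induce the non-alternating pattern $a_i a_j a_j a_i$. Thus stars have $\mathcal{R} \le 2$ (equivalently, stars are circle graphs). The remaining case, the word-representable prime split components, is where the substantive work lies.

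For a word-representable split component $L = (I \cup C, E)$ I would use the labeling of the clique $C = \{1, \ldots, k\}$ furnished by Theorem~\ref{Word_split_graph}, under which each independent vertex $a \in I$ has neighbourhood either an interval $[l, r]$ or a co-interval $[1, m] \cup [n, k]$. From this labeling I would build an explicit word of uniform length at most three that represents $L$: I would begin with a small number of copies of the clique word $1\,2\cdots k$ (which already realises every clique edge), and then insert each independent vertex so that it alternates with exactly the clique letters of its neighbourhood and with no other independent vertex. Conditions (i)--(iii) of Theorem~\ref{Word_split_graph} are precisely what make such insertions mutually consistent: the interval/co-interval shape governs alternation with the clique, while (ii) and (iii) rule out spurious alternations between independent vertices. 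This yields $\mathcal{R}(L) \le 3$, and combining the three bounds gives $\mathcal{R}(G) \le \max\{1, 2, 3\} = 3$.

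The main obstacle is exactly the split-graph case: producing a single word of thickness three that simultaneously realises all interval-type and co-interval-type neighbourhoods without creating unwanted alternations. The interval-type vertices are comparatively tame and can be accommodated within two copies of the clique word (reflecting that the purely interval case is a circle graph), whereas the wrap-around co-interval vertices are what force a third copy and require conditions (ii)--(iii) to avoid conflicts; the delicate part is the bookkeeping needed to verify non-alternation for every pair of independent vertices. Should a direct construction become unwieldy, I would instead invoke the bound $\mathcal{R} \le 3$ for word-representable split graphs coming from the same analysis as Theorem~\ref{Word_split_graph} in \cite{Kitaev_2021,Kitaev_2024}, which then reduces the entire theorem to the split-decomposition argument of the first paragraph.
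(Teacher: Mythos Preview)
Your approach is essentially identical to the paper's: reduce via \cite[Theorem~3.4]{Tithi_2024} to bounding $\mathcal{R}$ on the components of the minimal split decomposition, note each component is a word-representable split graph by Theorem~\ref{prime_components}, and then invoke the known bound $\mathcal{R}\le 3$ for word-representable split graphs. The paper's proof is just the terse version of your first paragraph, citing \cite[Theorem~5]{Tithi_2025} for that last bound rather than \cite{Kitaev_2021,Kitaev_2024}, and it does not treat cliques and stars separately since they are already split graphs covered by the same citation.
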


\begin{proof}
	Let $H_i$ $(1 \le i \le k)$ be the components of the minimal split decomposition of $G$. In view of \cite[Theorem 3.4]{Tithi_2024}, since $G$ is word-representable, we have each of $H_i$ is word-representable and $\mathcal{R}(G) = \max_{1 \le i \le k} \mathcal{R}(H_i)$. Further, since each $H_i$ is a word-representable split graph (by Theorem \ref{prime_components}), we have $\mathcal{R}(H_i) \le 3$ (by \cite[Theorem 5]{Tithi_2025}). Hence, we have  $\mathcal{R}(G) \le 3$. \qed
\end{proof}

\begin{theorem}\label{wpc_circle_graph}
	Let $G$ be a well-partitioned chordal graph. Then, $G$ is a circle graph if and only if $G$ is a $\mathcal{C}$-free graph, where $\mathcal{C}$ is the class of graphs given in \cite[Fig. 2]{Tithi_2025}.
\end{theorem}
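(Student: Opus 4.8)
The plan is to reduce the statement to the analogous characterization for split graphs established in \cite{Tithi_2025}, using the split decomposition as the bridge. The two ingredients I would combine are: first, the fact recorded in the introduction that a graph is a circle graph precisely when its representation number is at most two; and second, the identity $\mathcal{R}(G) = \max_{1 \le i \le k} \mathcal{R}(H_i)$ from \cite[Theorem 3.4]{Tithi_2024}, where $H_1, \dots, H_k$ are the components of the minimal split decomposition of $G$. By Theorem \ref{prime_components} each $H_i$ is a split graph, so the representation number of $G$ is governed entirely by split graphs, which are exactly the objects for which \cite[Fig. 2]{Tithi_2025} supplies the forbidden family $\mathcal{C}$.

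For the forward implication I would argue by heredity. Every member of $\mathcal{C}$ is a split graph that fails to be a circle graph, and the class of circle graphs is closed under induced subgraphs. Hence if $G$ were to contain some $F \in \mathcal{C}$ as an induced subgraph, then $F$ would be a non-circle induced subgraph of $G$, forcing $G$ itself to be a non-circle graph. Contrapositively, a circle graph $G$ must be $\mathcal{C}$-free.

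For the backward implication I would pass to the split components. Assume $G$ is $\mathcal{C}$-free. Since each component $H_i$ of the minimal split decomposition is isomorphic to an induced subgraph of $G$ \cite{cicerone1999extension}, and since being $\mathcal{C}$-free is a hereditary property, every $H_i$ is again $\mathcal{C}$-free. Each $H_i$ is a split graph by Theorem \ref{prime_components}, so the split-graph characterization of \cite{Tithi_2025} applies and yields that each $H_i$ is a circle graph, i.e. $\mathcal{R}(H_i) \le 2$. In particular every $H_i$ is word-representable, so by \cite[Theorem 3.4]{Tithi_2024} the graph $G$ is word-representable and $\mathcal{R}(G) = \max_{1 \le i \le k} \mathcal{R}(H_i) \le 2$; therefore $G$ is a circle graph.

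The only genuinely delicate point is the correct invocation of \cite{Tithi_2025}: I must use it in its biconditional form, namely that a split graph is a circle graph if and only if it is $\mathcal{C}$-free, rather than merely that the graphs of $\mathcal{C}$ are obstructions. Everything else is heredity together with the decomposition identity. I would also remark that $\mathcal{C}$ remains a minimal obstruction set within the well-partitioned chordal class: each $F \in \mathcal{C}$ is a split graph and hence itself a well-partitioned chordal graph, so its minimality as a non-circle graph carries over unchanged from the split-graph setting, and no member of $\mathcal{C}$ can be dropped.
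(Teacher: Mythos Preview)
Your proposal is correct and follows essentially the same route as the paper: heredity of circle graphs for the forward direction, and for the converse pass to the split components (which are split graphs by Theorem~\ref{prime_components} and induced subgraphs of $G$), apply the split-graph circle characterization, and reassemble via \cite[Theorem~3.4]{Tithi_2024}. The only cosmetic difference is that the paper phrases the converse by contradiction and cites \cite[Theorem~1.1]{Split_circle_graphs} rather than \cite{Tithi_2025} for the biconditional ``split graph is circle iff $\mathcal{C}$-free'', which is exactly the delicate point you already flagged.
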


\begin{proof}
	Since each graph belonging to the class $\mathcal{C}$ is not a circle graph, if $G$ is a circle graph, then $G$ is $\mathcal{C}$-free. Conversely, suppose that $G$ is a $\mathcal{C}$-free graph. We prove that $G$ is a circle graph. On the contrary, suppose that $G$ is not a circle graph. Let $H_i$ $(1 \le i \le k)$ be the components of the minimal split decomposition of $G$. Then, there exists at least one component, say $H_t$, such that $H_t$ is not a circle graph; otherwise, if for each $1 \le i \le k$, $H_i$ is a circle graph, i.e., a $2$-word-representable graph, by \cite[Theorem 3.4]{Tithi_2024}, we have $G$ is a $2$-word-representable graph (hence, a circle graph), a contradiction to $G$ is not a circle graph. In view of Theorem \ref{prime_components}, since $H_t$ is a split graph, by \cite[Theorem 1.1]{Split_circle_graphs}, $H_t$ must contain at least one graph from the class $\mathcal{C}$ as an induced subgraph. Since $H_t$ is an induced subgraph of the graph $G$, $G$ contains at least one graph from the class $\mathcal{C}$ as an induced subgraph, a contradiction to $G$ is $\mathcal{C}$-free. Hence, $G$ is a circle graph. \qed
\end{proof}

Since each graph belonging to the class $\mathcal{C}$ is a minimally non-circle\footnote{A graph $G$ is minimally non-circle if $G$ is not a circle graph but every proper induced subgraph of $G$ is a circle graph.} graph \cite[Theorem 3.44]{Split_circle_graphs}, Theorem \ref{wpc_circle_graph} provides a minimal forbidden induced subgraph characterization of circle graphs restricted to well-partitioned chordal graphs. Further, we have the following proposition.

\begin{proposition}
	Each graph belonging to the class $\mathcal{C}$ is a prime graph.
\end{proposition}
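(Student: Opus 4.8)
The plan is to argue by contradiction, using the fact noted immediately above the proposition that every graph in $\mathcal{C}$ is minimally non-circle, together with the behaviour of representation numbers under split decomposition. Since a graph is prime precisely when it admits no split, I would suppose that some $G = (V, E) \in \mathcal{C}$ is \emph{not} prime and fix a split $\{V_1, V_2\}$ of $G$.

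First I would form the two split components $H_1$ and $H_2$, with vertex sets $V_1 \cup \{v_1\}$ and $V_2 \cup \{v_2\}$, where $v_1, v_2$ are the new marker vertices. Because a split requires $|V_1| \ge 2$ and $|V_2| \ge 2$ while $V = V_1 \cup V_2$ is a disjoint union, each component has at most $|V| - 1$ vertices; and since each split component is isomorphic to an induced subgraph of $G$ \cite{cicerone1999extension}, both $H_1$ and $H_2$ are isomorphic to \emph{proper} induced subgraphs of $G$.

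Next I would invoke minimality: as $G$ is minimally non-circle, every proper induced subgraph of $G$ is a circle graph, so both $H_1$ and $H_2$ are circle graphs, equivalently $2$-word-representable. Applying \cite[Theorem 3.4]{Tithi_2024} to the split $\{V_1, V_2\}$, the representation number of $G$ equals the maximum of the representation numbers of its split components, hence at most $2$. Thus $G$ is $2$-word-representable, i.e., a circle graph, contradicting $G \in \mathcal{C}$. Therefore $G$ admits no split and is prime.

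The point to get right is the reduction step: one must ensure that both split components are strictly smaller than $G$ (guaranteed by $|V_1|, |V_2| \ge 2$) so that minimality forces them to be circle graphs, and that the circle-graph property transfers \emph{back} from the components to $G$ through the representation-number identity for split decomposition rather than through a direct chord-diagram construction. I expect no serious obstacle beyond careful book-keeping of the marker vertices $v_1, v_2$ when identifying $H_1$ and $H_2$ with induced subgraphs of $G$; in particular this argument is uniform over the whole class $\mathcal{C}$ and avoids inspecting the individual graphs of \cite[Fig.~2]{Tithi_2025} one by one.
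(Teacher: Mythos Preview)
Your argument is correct and follows essentially the same route as the paper: assume $G\in\mathcal{C}$ is not prime, observe that its split components are proper induced subgraphs and hence circle graphs by minimality, and then apply \cite[Theorem 3.4]{Tithi_2024} to deduce that $G$ itself is a circle graph, a contradiction. The only cosmetic difference is that the paper phrases the decomposition with $k$ components rather than a single split into two pieces, but the logic is identical.
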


\begin{proof}
	 For $G \in \mathcal{C}$, if $G$ is not a prime graph, then $G$ can be decomposed using split decomposition. Let $H_i$, $1 \le i \le k$, be the components of a split decomposition of $G$. Note that each $H_i$ is a proper induced subgraph of $G$. Thus, each $H_i$ is a circle graph as $G$ is a minimally non-circle graph \cite[Theorem 3.44]{Split_circle_graphs}. Then, in view of \cite[Theorem 3.4]{Tithi_2024}, we have $G$ is a circle graph, a contradiction. \qed
\end{proof}

\begin{figure}[t]
	\centering
	\begin{minipage}[b]{.25\textwidth}
		\centering
		\[\begin{tikzpicture}[scale=0.6]

			\vertex (1) at (0,0) [fill=black,label=left:$ $] {};
			\vertex (2) at (1.5,0) [fill=black,label=left:$ $] {};
			\vertex (3) at (0,1) [fill=black,label=left:$ $] {};	
			\vertex (4) at (1.5,1) [fill=black,label=left:$ $] {};
			\vertex (5) at (0.75,2) [fill=black,label=left:$ $] {};	
			\vertex (6) at (0.75,3) [fill=black,label=left:$ $] {};

			\path 
			(1) edge (3)
			(2) edge (4)
			(3) edge (5)
			(4) edge (5)
			(5) edge (6)
			(3) edge (4);

		\end{tikzpicture}\] 
		$W_1$	
	\end{minipage}%
	\begin{minipage}[b]{.25\textwidth}
		\centering
		
		\[\begin{tikzpicture}[scale=0.6]

			\vertex (1) at (0,0) [fill=black,label=left:$ $] {};
			\vertex (2) at (1.5,0) [fill=black,label=left:$ $] {};
			\vertex (3) at (3,0) [fill=black,label=left:$ $] {};	
			\vertex (4) at (0.75,1) [fill=black,label=left:$ $] {};
			\vertex (5) at (2.25,1) [fill=black,label=left:$ $] {};	
			\vertex (6) at (1.5,2) [fill=black,label=left:$ $] {};

			\path
			(1) edge (2)
			(2) edge (3)
			(1) edge (4)
			(2) edge (4)
			(2) edge (5)
			(3) edge (5)
			(4) edge (5)
			(4) edge (6)
			(5) edge (6); 
			
		\end{tikzpicture}\]
		$W_2$
	\end{minipage}%
	\begin{minipage}[b]{.25\textwidth}
		\centering
		
		\[\begin{tikzpicture}[scale=0.6]
			
			\vertex (1) at (0,0) [fill=black,label=left:$ $] {};
			\vertex (2) at (1.5,0) [fill=black,label=left:$ $] {};
			\vertex (3) at (0,1) [fill=black,label=left:$ $] {};	
			\vertex (4) at (1.5,1) [fill=black,label=left:$ $] {};
			\vertex (5) at (0.75,2) [fill=black,label=left:$ $] {};	
			\vertex (6) at (-0.75,2) [fill=black,label=left:$ $] {};	
			\vertex (7) at (2.25,2) [fill=black,label=left:$ $] {};

			\path 
			(1) edge (3)
			(2) edge (4)
			(3) edge (5)
			(4) edge (5)
			(5) edge (6)
			(3) edge (4)
			(3) edge (6)
			(5) edge (7)
			(4) edge (7);

		\end{tikzpicture}\]
		$W_3$
	\end{minipage}%
	\begin{minipage}[b]{.25\textwidth}
		\centering
		
		\[\begin{tikzpicture}[scale=0.35]
			
			\vertex (1) at (0,0) [fill=black,label=below:$ $] {};
			\vertex (2) at (-1.4,1.8) [fill=black,label=below:$ $] {};
			\vertex (3) at (1.4,1.8) [fill=black,label=right:$ $] {};
			\vertex (4) at (-2.8,0) [fill=black,label=left:$ $] {};
			\vertex (5) at (2.8,0) [fill=black,label=above:$ $] {};
			\vertex (6) at (-2.8,-1.8) [fill=black,label=right:$ $] {};
			\vertex (7) at (2.8,-1.8) [fill=black,label=left:$ $] {};
			
			\path 
			(1) edge (2)
			(1) edge (3)
			(1) edge (4)
			(1) edge (5)
			(2) edge (3)
			(2) edge (4)
			(3) edge (5)
			(4) edge (6)
			(5) edge (7);
			
		\end{tikzpicture}\]
		$W_4$
	\end{minipage}%
	\caption{The family of graphs $\mathcal{W}$}
	\label{fig1}	
\end{figure}
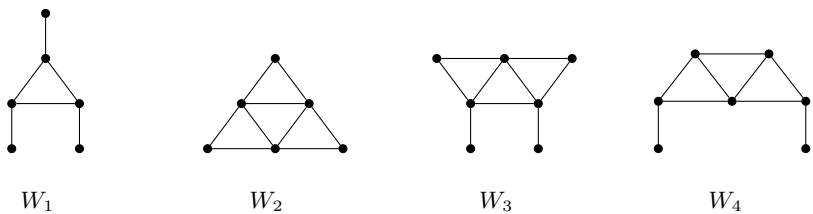

\begin{corollary}
	Let $G$ be a word-representable well-partitioned chordal graph. Then, $\mathcal{R}(G) = 3$ if and only if $G$ contains at least one of the following graphs as an induced subgraph: even-$k$-sun (for every $k \ge 4$), $F_0$, $F_1(k), F_2(K)$, for odd $k \ge 5$ (depicted in \cite[Fig. 2]{Tithi_2025}).
\end{corollary}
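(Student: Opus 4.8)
The plan is to reduce the statement to the circle-graph characterization already established in Theorem \ref{wpc_circle_graph}, exploiting the fact that the representation number of a word-representable well-partitioned chordal graph can only take values in $\{1, 2, 3\}$. First I would invoke Theorem \ref{wpc_rp_no} to record that $\mathcal{R}(G) \le 3$. Next, recall from the introduction that a word-representable graph satisfies $\mathcal{R}(G) \le 2$ precisely when it is a circle graph \cite{Hallsorsson_2011}. Combining these two facts, since $\mathcal{R}(G)$ is an integer not exceeding $3$, we obtain the dichotomy that $\mathcal{R}(G) = 3$ if and only if $\mathcal{R}(G) > 2$, i.e., if and only if $G$ is not a circle graph.

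Having reduced the claim to a purely structural condition, I would then apply Theorem \ref{wpc_circle_graph}, which asserts that a well-partitioned chordal graph $G$ is a circle graph if and only if it is $\mathcal{C}$-free. Taking the contrapositive, $G$ fails to be a circle graph exactly when it contains at least one member of the class $\mathcal{C}$ as an induced subgraph. It then remains to identify the members of $\mathcal{C}$ explicitly with the graphs listed in the statement: by \cite[Fig. 2]{Tithi_2025}, the class $\mathcal{C}$ consists precisely of the even-$k$-suns (for every $k \ge 4$), together with $F_0$, $F_1(k)$, and $F_2(k)$ for odd $k \ge 5$. Chaining these equivalences yields that $\mathcal{R}(G) = 3$ if and only if $G$ contains one of the listed graphs as an induced subgraph.

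The argument is essentially a bookkeeping combination of results already in hand, so I do not anticipate a genuine mathematical obstacle; the work lies entirely in the prior theorems. The one point that deserves care is the value $\mathcal{R}(G) = 1$, which occurs exactly when $G$ is a complete graph: such a graph is itself a circle graph, hence $\mathcal{C}$-free, so it is correctly absorbed into the $\mathcal{R}(G) \le 2$ branch and does not disturb the dichotomy. The only other subtlety is to ensure that the labelling of the forbidden family in the statement coincides with the family referenced in Theorem \ref{wpc_circle_graph}, which is a matter of cross-referencing the same figure rather than of proof.
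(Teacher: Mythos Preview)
Your overall strategy matches the paper's: reduce $\mathcal{R}(G) = 3$ to ``$G$ is not a circle graph'' via Theorem~\ref{wpc_rp_no} and the $2$-representability characterization of circle graphs, then invoke Theorem~\ref{wpc_circle_graph}. The gap lies in your final identification step. You assert that the family $\mathcal{C}$ of \cite[Fig.~2]{Tithi_2025} \emph{coincides} with the list of graphs in the corollary, but the paper treats the listed graphs only as a \emph{proper} subfamily of $\mathcal{C}$: in the converse direction it argues merely that ``each of these graphs belongs to the family $\mathcal{C}$'', and in the forward direction it requires an additional step to pass from ``contains some member of $\mathcal{C}$'' to ``contains one of the listed graphs''. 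If the two families were equal, that extra step would be vacuous.

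The missing ingredient is precisely the hypothesis that $G$ is word-representable, which your forward direction never uses. The paper closes the gap by invoking \cite[Lemma~4]{Tithi_2025} at this point: combined with the word-representability of $G$ (and hence of any induced subgraph of $G$), it rules out the remaining members of $\mathcal{C}$ and forces the induced obstruction to be one of the even-$k$-suns, $F_0$, $F_1(k)$, or $F_2(k)$. Once you insert this step, your argument is complete and agrees with the paper's proof.
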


\begin{proof}
	 Suppose that $\mathcal{R}(G) = 3$ so that $G$ is not a circle graph. Then, from Theorem \ref{wpc_circle_graph}, $G$ contains at least one graph from the family $\mathcal{C}$ as an induced subgraph. Further, in view of \cite[Lemma 4]{Tithi_2025}, as $G$ is a word-representable graph, we see that $G$ contains at least one of the following graphs as an induced subgraph: even-$k$-sun (for even $k \ge 4$), $F_0$, $F_1(k)$, $F_2(k)$, for odd $k \ge 5$.
	
	Conversely, suppose that $G$ contains at least one of the following graphs as an induced subgraph: even-$k$-sun (for even $k \ge 4$), $F_0$, $F_1(k)$, $F_2(k)$, for odd $k \ge 5$. Since each of these graphs belongs to the family $\mathcal{C}$, by Theorem \ref{wpc_circle_graph}, $G$ is not a circle graph so that $\mathcal{R}(G) > 2$. Further, since $\mathcal{R}(G) \le 3$ (by Theorem \ref{wpc_rp_no}), we have $\mathcal{R}(G) = 3$. \qed
\end{proof}

From the forbidden induced subgraph characterizations of both well-partitioned chordal graphs \cite{WPC_2022} and comparability graphs \cite{Gallaipaper}, the following result can be ascertained.

\begin{theorem}
	Let $G$ be a well-partitioned chordal graph. Then, $G$ is a comparability graph if and only if $G$ is $\mathcal{W}$-free, where $\mathcal{W}$ is the class of graphs given in Fig. \ref{fig1}.
\end{theorem}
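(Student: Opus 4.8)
The plan is to deduce the statement by comparing two forbidden-induced-subgraph lists. By Gallai's theorem \cite{Gallaipaper} there is an explicit family $\mathcal{F}$ of minimal forbidden induced subgraphs such that a graph is a comparability graph if and only if it contains no member of $\mathcal{F}$ as an induced subgraph. Combining this with the fact, already recorded in Section \ref{wpc_intro}, that the class of well-partitioned chordal graphs is hereditary, the theorem reduces to the purely combinatorial assertion that, among the members of $\mathcal{F}$, the ones that happen to be well-partitioned chordal graphs are exactly $W_1, W_2, W_3, W_4$. Both directions of the equivalence then follow by contraposition.

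For the forward direction I would argue as follows. Each $W_i$ is a member of $\mathcal{F}$ and hence is not a comparability graph. Since the class of comparability graphs is closed under induced subgraphs, any $G$ containing some $W_i$ as an induced subgraph fails to be a comparability graph. Thus a well-partitioned chordal graph that is a comparability graph must be $\mathcal{W}$-free.

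For the converse, suppose $G$ is well-partitioned chordal and not a comparability graph. By Gallai's theorem, $G$ contains some $F \in \mathcal{F}$ as an induced subgraph, and since $G$ is well-partitioned chordal and this class is hereditary, $F$ is itself a well-partitioned chordal graph. The heart of the argument is the claim
\[ \mathcal{F} \cap \{\,\text{well-partitioned chordal graphs}\,\} = \mathcal{W}, \]
which forces $F = W_i$ for some $i$, so $G$ is not $\mathcal{W}$-free; this is exactly the contrapositive we need. To establish the boxed claim I would run through Gallai's list $\mathcal{F}$ and test each member for membership in the well-partitioned chordal class. Two filters do most of the work: first, every well-partitioned chordal graph is chordal, so any $F \in \mathcal{F}$ containing an induced cycle of length at least four can be discarded at once (this removes the odd holes $C_{2n+1}$, $n \ge 2$, and the cycle-based infinite families of $\mathcal{F}$); second, for each surviving chordal candidate I would apply the forbidden-induced-subgraph characterization of well-partitioned chordal graphs from \cite{WPC_2022} (equivalently, try to exhibit a partition tree as in Section \ref{wpc_intro}), which retains precisely $W_1, W_2, W_3, W_4$.

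I expect the main obstacle to be this last bookkeeping step rather than any single clever idea. Chordality prunes $\mathcal{F}$ down to a short finite list quickly, but among the chordal Gallai graphs one must carefully separate those that are genuinely well-partitioned chordal from those that are chordal yet violate the partition-tree refinement (condition~2 of the definition), so as to certify that the intersection is \emph{exactly} $\mathcal{W}$ and not a strictly larger set. Concretely, for each $W_i$ I would display an explicit partition tree to confirm membership, and for every rejected chordal candidate I would exhibit one of the forbidden structures of \cite{WPC_2022}; this verification is routine but must be carried out exhaustively to make the displayed equality airtight.
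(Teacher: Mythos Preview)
Your approach is essentially the same as the paper's: the paper gives no detailed proof but simply states that the theorem ``can be ascertained'' from the forbidden induced subgraph characterizations of well-partitioned chordal graphs \cite{WPC_2022} and comparability graphs \cite{Gallaipaper}, which is precisely the comparison-of-lists argument you outline. Your proposal in fact supplies the details the paper omits, namely how to filter Gallai's list first by chordality and then by the obstruction set of \cite{WPC_2022} to arrive at exactly $\mathcal{W}$.
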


A poset is said to be cycle-free  if the corresponding comparability graph is a chordal graph. It was proved in \cite[Theorem 1]{cyclefreeposet_1991} that every cycle-free poset has dimension at most four. Accordingly, we have the following result on the \textit{prn} of a well-partitioned chordal comparability graph.

\begin{theorem}
	Let $G$ be a well-partitioned chordal graph. If $G$ is a comparability graph, then $\mathcal{R}^p(G) \le 4$.
\end{theorem}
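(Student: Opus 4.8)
The plan is to reduce the result to the cited dimension bound via the characterizations already established. The statement asserts that any well-partitioned chordal comparability graph $G$ satisfies $\mathcal{R}^p(G) \le 4$. First I would recall that for a comparability graph $G$, the value $\mathcal{R}^p(G)$ equals the dimension of an associated poset of $G$ (as noted in the preliminaries, citing \cite{khyodeno2}). Thus it suffices to show that every poset whose comparability graph is $G$ has dimension at most four, and then invoke \cite[Theorem 1]{cyclefreeposet_1991}, which bounds the dimension of cycle-free posets by four.

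The key bridge is the notion of a cycle-free poset: a poset is cycle-free precisely when its comparability graph is chordal. Since $G$ is a well-partitioned chordal graph, it is in particular a chordal graph (as stated in the introduction, the class is a subclass of chordal graphs). Therefore, fixing any transitive orientation of $G$ yields a poset $P$ whose comparability graph is $G$, and because $G$ is chordal, $P$ is cycle-free. Applying the cited dimension bound gives $\dim(P) \le 4$, and hence $\mathcal{R}^p(G) = \dim(P) \le 4$, as required.

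The steps in order are: (i) use that $G$ comparability and chordal implies any transitive orientation of $G$ gives a cycle-free poset $P$ with comparability graph $G$; (ii) apply \cite[Theorem 1]{cyclefreeposet_1991} to conclude $\dim(P) \le 4$; (iii) translate back via the identity $\mathcal{R}^p(G) = \dim(P)$ to obtain $\mathcal{R}^p(G) \le 4$.

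The main subtlety to verify is the exact equivalence used in step (i): one must confirm that the definition of cycle-free poset matches "comparability graph is chordal," and that this equivalence is what licenses applying the theorem to the poset arising from $G$. Since the excerpt explicitly defines a poset to be cycle-free when its comparability graph is chordal, and $G$ is chordal, this match is immediate; the only care needed is ensuring the relevant induced poset (for which $\mathcal{R}^p$ is computed) is itself cycle-free, which follows because its comparability graph is exactly $G$. No delicate estimation is involved — the proof is essentially a direct composition of the stated facts, so the result follows cleanly once the correspondence between $\mathcal{R}^p$, poset dimension, and the cycle-free property is lined up.
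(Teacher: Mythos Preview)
Your proposal is correct and matches the paper's approach exactly: the paper does not give a separate formal proof but simply observes that a well-partitioned chordal graph is chordal, so any induced poset is cycle-free, and then invokes \cite[Theorem 1]{cyclefreeposet_1991} together with the identification of $\mathcal{R}^p(G)$ with poset dimension. Your write-up spells out precisely these steps.
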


\subsection{Recognition algorithm}

Recall that not all well-partitioned chordal graphs are word-representable, since not all graphs in the subclass of split graphs are word-representable. In this section, we focus on the following recognition problem.

\subsubsection{Problem}
	Given a well-partitioned chordal graph, is it word-representable?

\subsubsection{Algorithm}
    In view of the characterization given in Corollary \ref{char_wpc}, it can be observed that the above problem can be solved by decomposing $G$ and verifying whether all the components are word-representable or not. Algorithm \ref{algo-1} performs this test.

\begin{algorithm}[h]\label{Algo_1}
	\caption{Recognizing word-representability of well-partitioned chordal graphs.}
	\label{algo-1}
	\KwIn{A well-partitioned chordal graph $G$.}
	\KwOut{Yes if $G$ is word-representable; otherwise, No.}
	
	Compute the minimal split decomposition $H$ of $G$. Let $H_i$, $1 \le i \le k$, be the components of $H$.\\
	Flag = Yes.\\
	\For{each $H_i \in H$}{\If{$H_i$ is not a word-representable graph}{Flag = No.}}
	\Return{{\rm Flag}}
\end{algorithm}

\subsubsection{Complexity}
The minimal split decomposition can be computed in linear time \cite{pierre_lt_algo,dahlhaus1994} (step 1). Note that the number of components of the decomposition are polynomially bounded with respect to the size of the input graph \cite{Stefano_2012}. Since every $H_i$ is a split graph (by Theorem \ref{prime_components}), the word-representability of each $H_i$ can be verified in polynomial time (step 4) on the size of $H_i$ \cite{Kitaev_2024}. Thus, testing all the components (step 3) takes polynomial time on the size of the input graph. Hence, we have the following theorem.

\begin{theorem}
	The word-representability of a well-partitioned chordal graph can be decided in polynomial time.
\end{theorem}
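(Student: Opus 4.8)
The plan is to establish the correctness and the polynomial running time of Algorithm \ref{algo-1}, which is the algorithmic counterpart of the characterization obtained in Corollary \ref{char_wpc}. First I would argue correctness. Given the input well-partitioned chordal graph $G$, the algorithm computes its minimal split decomposition $H$ with components $H_1, \ldots, H_k$. By Corollary \ref{char_wpc}, $G$ is word-representable if and only if every prime component of $H$ is a word-representable split graph. Since cliques and stars are always word-representable split graphs, it suffices for the algorithm to examine each $H_i$ and to declare $G$ word-representable exactly when none of the components fails the word-representability test. This is precisely what the flag in Algorithm \ref{algo-1} records, so the output is correct.

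Next I would bound the running time step by step. Computing the minimal split decomposition can be carried out in linear time by the algorithms of \cite{pierre_lt_algo,dahlhaus1994}, which handles step 1. The number of components $k$ is polynomially bounded in the size of $G$ \cite{Stefano_2012}, and the total size of all the components is linear in the size of $G$, since each vertex of $G$ lies in exactly one component together with only a bounded number of marker vertices. The \emph{crucial} structural input, supplied by Theorem \ref{prime_components}, is that every component $H_i$ is a split graph. This is what allows the inner test in step 4 to decide the word-representability of each $H_i$ in polynomial time on $|V(H_i)|$, via the split-graph recognition procedure of \cite{Kitaev_2024}, rather than by any general and computationally hard method. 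Summing these polynomial per-component costs over the polynomially many components then yields a polynomial overall bound, giving the theorem.

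The main obstacle, and indeed the reason the result holds at all, is that word-representability recognition is hard for arbitrary graphs but tractable for split graphs; the entire argument therefore hinges on Theorem \ref{prime_components}, which guarantees that the minimal split decomposition of a well-partitioned chordal graph never produces a prime component lying outside the split-graph family. Once this structural fact is available, the complexity analysis is a routine aggregation of the per-step bounds, and the correctness is immediate from Corollary \ref{char_wpc}. Accordingly, I expect no genuine difficulty beyond carefully invoking Theorem \ref{prime_components} together with the cited linear-time decomposition and polynomial-time split-graph recognition results.
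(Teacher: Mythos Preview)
Your proposal is correct and follows essentially the same approach as the paper: establish correctness via Corollary \ref{char_wpc}, compute the minimal split decomposition in linear time \cite{pierre_lt_algo,dahlhaus1994}, note that the number of components is polynomially bounded \cite{Stefano_2012}, and then invoke Theorem \ref{prime_components} so that each component is a split graph whose word-representability can be tested in polynomial time via \cite{Kitaev_2024}. Your write-up is slightly more explicit about correctness and about why the per-component costs aggregate to a polynomial bound, but the argument and the cited ingredients are the same.
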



\begin{thebibliography}{10}
	
	\bibitem{WPC_2022}
	J.~Ahn, L.~Jaffke, O.-j. Kwon, and P.~T. Lima.
	\newblock Well-partitioned chordal graphs.
	\newblock {\em Discrete Math.}, 345(10):Paper No. 112985, 23, 2022.
	
	\bibitem{Split_circle_graphs}
	F.~Bonomo-Braberman, G.~Dur\'{a}n, N.~Pardal, and M.~D. Safe.
	\newblock Forbidden induced subgraph characterization of circle graphs within
	split graphs.
	\newblock {\em Discrete Appl. Math.}, 323:43--75, 2022.
	
	\bibitem{circlegraph3}
	A.~Bouchet.
	\newblock Reducing prime graphs and recognizing circle graphs.
	\newblock {\em Combinatorica}, 7(3):243--254, 1987.
	
	\bibitem{Bouchet_1}
	A.~Bouchet.
	\newblock Transforming trees by successive local complementations.
	\newblock {\em J. Graph Theory}, 12(2):195--207, 1988.
	
	\bibitem{pierre_lt_algo}
	P.~Charbit, F.~de~Montgolfier, and M.~Raffinot.
	\newblock Linear time split decomposition revisited.
	\newblock {\em SIAM J. Discrete Math.}, 26(2):499--514, 2012.
	
	\bibitem{cicerone1999extension}
	S.~Cicerone and G.~Di~Stefano.
	\newblock On the extension of bipartite to parity graphs.
	\newblock {\em Discrete Appl. Math.}, 95(1-3):181--195, 1999.
	
	\bibitem{Cunningham_2}
	W.~H. Cunningham.
	\newblock Decomposition of directed graphs.
	\newblock {\em SIAM J. Algebraic Discrete Methods}, 3(2):214--228, 1982.
	
	\bibitem{Cunningham_1}
	W.~H. Cunningham and J.~Edmonds.
	\newblock A combinatorial decomposition theory.
	\newblock {\em Canadian J. Math.}, 32(3):734--765, 1980.
	
	\bibitem{dahlhaus1994}
	E.~Dahlhaus.
	\newblock Efficient parallel and linear time sequential split decomposition.
	\newblock In {\em Foundation of Software Technology and Theoretical Computer
		Science: 14th Conference Madras, India, December 15--17, 1994 Proceedings
		14}, pages 171--180. Springer, 1994.
	
	\bibitem{Stefano_2012}
	G.~Di~Stefano.
	\newblock Distance-hereditary comparability graphs.
	\newblock {\em Discrete Appl. Math.}, 160(18):2669--2680, 2012.
	
	\bibitem{Tithi_2024}
	T.~Dwary and K.~V. Krishna.
	\newblock Word-representability of graphs with respect to split recomposition.
	\newblock {\em Discrete Appl. Math.}, 357:310--321, 2024.
	
	\bibitem{Tithi_2025}
	T.~Dwary, K.~Mozhui, and K.~V. Krishna.
	\newblock Representation number of word-representable split graphs.
	\newblock {\em arXiv:2502.00872}, 2025.
	
	\bibitem{Gallaipaper}
	T.~Gallai.
	\newblock Transitiv orientierbare {G}raphen.
	\newblock {\em Acta Math. Acad. Sci. Hungar.}, 18:25--66, 1967.
	
	\bibitem{DHgraph1}
	C.~Gavoille and C.~Paul.
	\newblock Distance labeling scheme and split decomposition.
	\newblock {\em Discrete Math.}, 273(1-3):115--130, 2003.
	
	\bibitem{Hallsorsson_2011}
	M.~M. Halld\'{o}rsson, S.~Kitaev, and A.~Pyatkin.
	\newblock Alternation graphs.
	\newblock In {\em Graph-theoretic concepts in computer science}, volume 6986 of
	{\em Lecture Notes in Comput. Sci.}, pages 191--202. Springer, Heidelberg,
	2011.
	
	\bibitem{Kitaev_2021}
	S.~Kitaev, Y.~Long, J.~Ma, and H.~Wu.
	\newblock Word-representability of split graphs.
	\newblock {\em J. Comb.}, 12(4):725--746, 2021.
	
	\bibitem{words&graphs}
	S.~Kitaev and V.~Lozin.
	\newblock {\em Words and graphs}.
	\newblock Monographs in Theoretical Computer Science. An EATCS Series.
	Springer, Cham, 2015.
	
	\bibitem{MR2467435}
	S.~Kitaev and A.~Pyatkin.
	\newblock On representable graphs.
	\newblock {\em J. Autom. Lang. Comb.}, 13(1):45--54, 2008.
	
	\bibitem{Kitaev_2024}
	S.~Kitaev and A.~Pyatkin.
	\newblock On semi-transitive orientability of split graphs.
	\newblock {\em Inform. Process. Lett.}, 184:Paper No. 106435, 4, 2024.
	
	\bibitem{perkinsemigroup}
	S.~Kitaev and S.~Seif.
	\newblock Word problem of the {P}erkins semigroup via directed acyclic graphs.
	\newblock {\em Order}, 25(3):177--194, 2008.
	
	\bibitem{cyclefreeposet_1991}
	T.~H. Ma and J.~P. Spinrad.
	\newblock Cycle-free partial orders and chordal comparability graphs.
	\newblock {\em Order}, 8(1):49--61, 1991.
	
	\bibitem{khyodeno2}
	K.~Mozhui and K.~V. Krishna.
	\newblock On the permutation-representation number of bipartite graphs using
	neighborhood graphs.
	\newblock {\em arXiv:2311.13980}, 2023.
	
	\bibitem{graphcoloring}
	M.~Rao.
	\newblock Coloring a graph using split decomposition.
	\newblock In {\em Graph-theoretic concepts in computer science}, volume 3353 of
	{\em Lecture Notes in Comput. Sci.}, pages 129--141. Springer, Berlin, 2004.
	
	\bibitem{NPcompleteproblems}
	M.~Rao.
	\newblock Solving some {NP}-complete problems using split decomposition.
	\newblock {\em Discrete Appl. Math.}, 156(14):2768--2780, 2008.
	
	\bibitem{circlegraph1}
	J.~Spinrad.
	\newblock Recognition of circle graphs.
	\newblock {\em J. Algorithms}, 16(2):264--282, 1994.
	
	\bibitem{yannakakis1982complexity}
	M.~Yannakakis.
	\newblock The complexity of the partial order dimension problem.
	\newblock {\em SIAM J. Algebraic Discrete Methods}, 3(3):351--358, 1982.
	
\end{thebibliography}
\end{document}